\newtheorem{prop}{Proposition}[section]
\newtheorem{teo}{Theorem}[section]
\def\ep{\varepsilon}
\def\a{\mathfrak a}
\def\R{\mathbb R}
\def\K{{\mathcal K}}
\def\T{{\mathcal T}}
\begin{document}

\title[Nonlocal diffusion
with absorption]{Asymptotic behavior for a nonlocal diffusion equation with
absorption and nonintegrable initial data. The supercritical case}

\author[J. Terra \and N. Wolanski]{Joana Terra\and Noemi Wolanski}

\thanks{
\noindent 2000 {\it Mathematics Subject Classification } 35K57,
35B40.}

\keywords{Nonlocal diffusion, boundary value problems.}
\address{Joana Terra\hfill\break\indent
Departamento  de Matem{\'a}tica, FCEyN \hfill\break\indent UBA (1428)
Buenos Aires, Argentina.} \email{{\tt jterra@dm.uba.ar} }

\address{Noemi Wolanski \hfill\break\indent
Departamento  de Matem{\'a}tica, FCEyN \hfill\break\indent UBA (1428)
Buenos Aires, Argentina.} \email{{\tt wolanski@dm.uba.ar} }

\date{}

\begin{abstract} In this paper we study the asymptotic behavior as time goes to infinity of the solution to a nonlocal diffusion equation with absorption modeled by a powerlike reaction $-u^p$, $p>1$ and set in $\R^N$. We consider a bounded, nonnegative initial datum $u_0$ that behaves like a negative power at infinity. That is, $|x|^\alpha u_0(x)\to A>0$ as $|x|\to\infty$ with $0<\alpha\le N$. We prove that, in the supercritical case $p>1+2/\alpha$, the solution behaves asymptotically as that of the heat equation --with diffusivity $\a$ related to the nonlocal operator-- with the same initial datum.

\end{abstract}

\maketitle

\date{}

\section{Introduction}
\label{Intro} \setcounter{equation}{0}
In this paper we study the asymptotic behavior as time goes to infinity of the solution to a nonlocal diffusion equation in $R^N$ with an absorption term. We are interested in the case in which the initial datum is nonintegrable. More precisely, we consider a bounded datum $u_0(x)$ such that $|x|^\alpha u_0(x)\to A>0$ as $|x|\to\infty$ with $0<\alpha\le N$. The problem under consideration is the following
\begin{equation}
\begin{aligned}\label{problem}
&u_t(x,t)=\int J(x-y)\big(u(y,t)-u(x,t)\big)\,dy - u^p(x,t)\\
&u(x,0)=u_0(x)
\end{aligned}
\end{equation}
where $J\in C_0^\infty(\R^N)$, radially symmetric with $J\ge0$ and $\int J=1$.

The term $\int J(x-y)\big(u(y,t)-u(x,t)\big)\,dy$ in \eqref{problem} represents diffusion. In fact, let $u(x,t)$ be the density of a population at the point $x$ at time $t$. The kernel $J(x-y)$ may be seen as the probability distribution density of jumping from site $y$ to site $x$. Thus, $\int J(x-y)u(y,t)\,dy$ represents the rate at which the population is arriving at the site $x$ from all over space. By the symmetry of the kernel $J$, $\int J(x-y)u(x,t)\,dy$ represents then the rate at which it is  leaving the point $x$.

The absorption term $-u^p(x,t)$ in the equation represents a rate of
consumption due to an internal reaction.

 This diffusion operator has been used to model several nonlocal diffusion processes in the last few years. See for instance \cite{BCh1,BCh2,BFRW,CF,F,Z}. In particular, nonlocal diffusions are of interest in biological and biomedical problems. Recently, this kind of nonlocal operators  have also been used  for image enhancement  \cite{GO}.

\bigskip

It can be seen that -when properly rescaled- the nonlocal operator approximates $\a
\Delta$, where $\Delta$ is the Laplace operator and $\a$ is a constant that depends
only on $J$ and the dimension $N$. (See, for instance, \cite{CER, CERW2}).

Another close relation to the heat operator with diffusivity $\a$ was found in \cite{ChChR, IR}
where it was proven that, for bounded and integrable initial data, the asymptotic
behavior as $t$ tends to infinity of the solution $u_L$ to the equation without
absorption 
\begin{equation}\label{probl-homo} u_t(x,t)=\int J(x-y)\big(u(y,t)-u(x,t)\big)\,dy =Lu,
\end{equation}
 is the same
as that of the solution of the heat equation with diffusivity $\a$ and the same initial condition.
Namely,
\begin{equation}\label{asym-L1}
t^{N/2}\|u(x,t)-MU_\a(x,t)\|_{L^\infty(|x|\le K\sqrt t)}\to0\quad\mbox{as }t\to\infty\quad\forall K>0,
\end{equation}
with $U_\a$ the fundamental solution of the heat equation with diffusivity $\a$ and $M=\int u_0$
the  initial mass. The assumption on the initial condition is that $u_0\in L^\infty\cap L^1$.

In a recent paper \cite{PR}, the nonlocal equation with absorption \eqref{problem} was considered in the case $u(x,0)\in L^\infty\cap L^1$ and $p>1+2/N$. The authors prove
that $u$ also satisfies \eqref{asym-L1}. Some
results for $p<1+2/N$ can also be found in that paper. The critical case $p=1+2/N$
was left open.

\bigskip

On the other hand, the asymptotic behavior of the solutions to the heat equation and of the heat
equation with absorption are very well known for some nonintegrable initial data with a precise
power like behavior at infinity. In fact, Kamin and Peletier in \cite{KP} study the
asymptotic behavior of the solution to
\begin{equation}\label{heat}
\begin{aligned}
&u_t=\a\Delta u-u^p\qquad\mbox{in }\R^N\times(0,\infty),\\
&u(x,0)=u_0(x)\qquad\mbox{in }\R^N,
\end{aligned}
\end{equation}
with $u_0\in L^\infty$ and $u_0\ge0$ such that 
\begin{equation}\label{condalpha}
|x|^{\alpha} u_0(x)\to A>0 \quad\text{ as }\, |x|\to\infty,
\end{equation}
with $0<\alpha<N$. In fact, the authors prove that
\begin{equation}\label{asym-heat-alpha}
t^{\alpha/2}\|u(x,t)-U_{\alpha,A}(x,t)\|_{L^\infty(|x|\le K\sqrt t)}\to0\quad\mbox{as
}t\to\infty\quad\forall K>0,
\end{equation}
where $U_{\alpha,A}$ is the solution to
\begin{equation}\label{U-alpha}
\begin{aligned}
&U_t=\a\Delta U-\delta_{p,\alpha} U^p\qquad\mbox{in }\R^N\times(0,\infty),\\
&U(x,0)=\frac A{|x|^\alpha}\qquad\mbox{in }\R^N.
\end{aligned}
\end{equation}

Here $\delta_{p,\alpha}=0$ in the supercritical case $p>1+2/\alpha$ and $\delta_{p,\alpha}=1$ in the critical case $p=1+2/\alpha$.

Then, in \cite{KU}, Kamin and Ughi study the case where $\alpha=N$ in
\eqref{condalpha} and $p>1+2/N$. They prove that
\begin{equation}\label{heat-N}
t^{N/2}\Big\|\frac{u(x,t)}{\log t}- C_{A,N} U_\a(x,t)\Big\|_{L^\infty(|x|\le K\sqrt
t)}\to0\quad\mbox{as }t\to\infty\quad\forall K>0,
\end{equation}
with $C_{A,N}$ a constant that depends only on $A$ and $N$ and $U_\a$ the fundamental solution of the heat operator with diffusivity $\a$.

Later on, in \cite{H}, Herraiz fully analyzes problem \eqref{heat} and presents a
complete characterization including the asymptotic behavior outside the parabolas $|x|\le K\sqrt
t$.

\bigskip

It is the purpose of this paper to prove that, for initial data
satisfying \eqref{condalpha} with $0<\alpha\leq N$ and $p>1+2/\alpha$, the
asymptotic behavior of the solution to the nonlocal diffusion problem with
absorption \eqref{problem} is the same as the asymptotic behavior of solutions to
the heat equation with absorption  \eqref{heat}. That is, that
\eqref{asym-heat-alpha} holds in the case $\alpha<N$ and \eqref{heat-N} holds in the case
$\alpha=N$ with $u$ the solution of \eqref{problem}.

The proofs of these results rely strongly on estimates for the fundamental
solution of the equation without absorption \eqref{probl-homo} .

In \cite{ChChR} the authors observed that the fundamental solution of the operator in \eqref{probl-homo} can be written as
\begin{equation}\label{fund-sol}
U(x,t)=e^{-t}\delta +W(x,t),
\end{equation}
where $\delta$ is the Dirac measure and $W(x,t)$ is a smooth function.

Then, in \cite{IR} the authors established $L^{\infty}$ estimates for $W$ and
proved that
$$
t^{N/2}\|W(x,t)-U_\a(x,t)\|_{L^\infty(\R^N)}\to0\quad\mbox{as }t\to\infty,
$$
where $U_\a$ is the fundamental solution of the heat equation with diffusivity $\a$.

This estimate allowed them to prove that the solution of the homogeneous nonlocal diffusion equation \eqref{probl-homo} with bounded and integrable initial data behaves -for $t$ tending to infinity- as the solution of the heat equation with diffusivity $\a$, removing in this way the hypothesis that the Fourier transform of $u_0$ be in $L^1$ as was assumed in \cite{ChChR}.

In the present paper, as we are considering nonintegrable initial data, these $L^\infty$ estimates cannot be used. So, the first step is to get estimates in $L^q-$norms for $1\le q<\infty$.

\bigskip

\bigskip

As in \cite{PR}, the philosophy of the study of the asymptotic behavior for the problem with absorption in the
supercritical case is that, as time tends to infinity, the absorption term goes
to 0 so fast that the behavior is the same as that of the equation without absorption. Thus, what
we need to prove is essentially that, as time tends to infinity, the difference
between the solutions to both problems goes to 0 faster than each term separately.

In order to prove this result it is necessary to compare the solution $u$ of
\eqref{problem} with the solution $u_L$ of the  equation without absorption
\eqref{probl-homo} that coincides with $u$ at a large time $t_0$. Once we prove that
this difference is small the result follows if we know that the asymptotic behavior of $u_L$ is
the same as that of \eqref{heat}. To this end, we first analyze the asymptotic
behavior for the nonlocal equation without absorption \eqref{probl-homo} with an initial datum satisfying \eqref{condalpha}.
In order to finish the argument, it is essential to know, and we prove it in this paper, that the behavior in space of the solution of our problem \eqref{problem}
at any time $t_0$ is the same as that of the initial datum $u_0$. That is, that $u(\cdot, t_0)$ satisfies \eqref{condalpha} at any positive time $t_0$.

\bigskip

The techniques of this paper do not work in the critical case $p=1+2/\alpha$. In
fact, for such $p$, the asymptotic behavior is  that of  the heat equation with
absorption since the diffusion and the absorption terms are of the same order. We have to proceed
in a completely different way by going back to the techniques of the original paper for the heat
equation \cite{KP}, and we do so in \cite{TW}. The method in \cite{TW} also works in the
supercritical case considered in this paper. Nevertheless, the ideas in the
present paper are simpler and rather easy to implement and therefore, we consider
that it is worth presenting them separately.

\bigskip

The paper is organized as follows. In Section \ref{prelim} we prove the asymptotic estimates of
$W$ in $L^q$ norms for finite $q$ and use them to study the behavior of the
solution $u_L$ of the linear equation \eqref{probl-homo}
with a nonnegative, bounded initial datum satisfying \eqref{condalpha} with $0<\alpha\le N$.

In particular, we prove that the solution $u_L$ satisfies
\begin{equation}\label{asymp-homo}
t^{\alpha/2}\|u_L(x,t)-u_\Delta(x,t)\|_{L^\infty(\R^N)}\to0\qquad\mbox{as }t\to\infty,
\end{equation}
where $u_\Delta$ is the solution of the heat equation with diffusivity $\a$ and initial condition $u_L(x,0)$.

Moreover, in this section we establish the precise behavior of $u_L$ for
$|x|\to\infty$.

In Section \ref{existence} we prove the global existence and uniqueness of the solution to
\eqref{problem}  -with a more general absorption term $-|u|^{p-1}u$- for our
nonintegrable initial data with power like behavior at infinity. We also
establish that the solution is bounded, in any time interval [0,T], by $C_T(1+|x|)^{-\alpha}$ for a certain
constant $C_T$. Then, when $u_0$ is nonnegative, we deduce that the solution is nonnegative.
Finally, we prove that for every $t_0>0$
$$
|x|^\alpha u(x,t_0)
\to A\qquad\mbox{as }|x|\to\infty.
$$

Finally, in Section \ref{result} we prove our main result. That is, we prove that
\eqref{asym-heat-alpha} holds when $\alpha<N$ and \eqref{heat-N} when $\alpha=N$ with $u$ the
solution to \eqref{problem}.

\bigskip

\section{Preliminary results and homogeneous equation}\label{prelim}\setcounter{equation}{0}
In this section we study the asymptotic behavior of bounded solutions to the homogeneous equation
\begin{equation}\label{homo-sect2}
\begin{aligned}
&u_t=\int J(x-y)\big(u(y,t)-u(x,t)\big)\,dy\qquad\mbox{in }\R^N\times(0,\infty),\\
&u(x,0)=u_0(x)\qquad\mbox{in }\R^N.
\end{aligned}
\end{equation}

Existence  of bounded solutions for bounded initial data is a well know result
that  can be seen by different arguments. One possible construction is to convolve the initial
datum with the fundamental solution \eqref{fund-sol}. Uniqueness of bounded
solutions follows from the  comparison principle for bounded solutions. (See \cite{LW},
Proposition 2.2 with $\theta=0$ for the case of smooth solutions. In order to get the result in
our case we just have to approximate  the initial datum by smooth functions that are uniformly
bounded).

In this section we first obtain estimates in $L^q(\R^N)$ for the ``good'' part $W(x,t)$ of the fundamental solution of the nonlocal diffusion equation.

Then, we get the result on the asymptotic behavior as time goes to infinity of the solution of the equation for initial data that behave as a negative power at infinity.

Finally, we prove that the solution behaves as the initial datum for every $t>0$.

\bigskip

Let us recall (see \cite{ChChR}) that the fundamental solution of the equation \eqref{probl-homo} is given by
\begin{equation}\label{fundamental}
U(x,t)=e^{-t}\delta+W(x,t),
\end{equation}
where $W$ is a smooth function. In fact, by taking Fourier transform in the space variables the authors show that $W(\cdot,t)$ belongs to the Schwartz class of $C^\infty$ functions rapidly decaying at infinity together with their derivatives.

Moreover, in \cite{IR} by using the characterization of $W$ in terms of its Fourier transform, the authors prove that
\begin{equation}\label{aprox-W}
t^{N/2}\|W(x,t)-U_\a(x,t)\|_{L^\infty(\R^N)}\le C t^{-1/2},
\end{equation}
where $U_\a$ is the fundamental solution of the heat equation with diffusivity $\a$.

The Fourier transform method allows to get $L^q$ estimates only for  $q\ge2$. In order to get an $L^1$ estimate we observe that $W$ is the solution to
\begin{equation}\label{eq-W}
\begin{aligned}
&W_t(x,t)=\int J(x-y)\big(W(y,t)-W(x,t)\big)\,dy + e^{-t}J(x)\\
&W(x,0)=0.
\end{aligned}
\end{equation}

In fact, let us call $L$ the nonlocal operator. This is $Lu=J*u-u$. Then,
\begin{align*}
W_t(x,t)-LW(x,t)&=
U_t(x,t)-LU(x,t)-\frac\partial{\partial t}e^{-t}\delta
+e^{-t}L\delta(x)\\
&=e^{-t}\delta+e^{-t}\big(J*\delta-\delta\big)(x)\\
&= e^{-t}J(x).
\end{align*}

Moreover, since $U(x,0)=\delta$ there holds that $W(x,0)=0$.

In particular, by the comparison principle, $W\ge0$. On the other hand, since the unique bounded solution to the homogeneous nonlocal diffusion equation with initial datum $u_0\equiv1$ is the function $u(x,t)\equiv1$, the representation of this solutions gives
$$
1=u(x,t)=e^{-t}u_0(x)+\int W(x-y,t)u_0(y)\,dy= e^{-t}+\int W(y,t)\,dy.
$$
Thus, $\int W(y,t)\,dy=1-e^{-t}\le 1$ for every $t$.

Now, since $\int U_\a(y,t)\,dy=1$ for every $t$, interpolation with inequality \eqref{aprox-W} gives the estimate
\begin{equation}\label{estimate-W-U}
\|W(x,t)-U_\a(x,t)\|_{L^{q'}(\R^N)}\le C_q t^{-\frac{N+1}{2q}}\quad\mbox{if}\quad\frac1q+\frac1{q'}=1.
\end{equation}

In particular, for every $1\le q\le \infty$,
\begin{equation}\label{estimate-W}
\|W(x,t)\|_{L^{q'}(\R^N)}\le C_q t^{-\frac N{2q}}.
\end{equation}

\bigskip

With this estimate we can prove that for large $t$ the solution $u$ to
\eqref{homo-sect2} behaves as
the solution of the heat equation with diffusivity $\a$. In fact,

\begin{teo}\label{teo-u_L} Let $u_0\in L^\infty$, nonnegative such that there exist $0<\alpha\le N$
 and  $A>0$ with $|x|^\alpha u_0(x)\to A$ as $|x|\to\infty$. Let $u$ be the solution of the equation $u_t=Lu$ with initial condition $u_0$. Then, for every $0<\mu<\frac\alpha{2N}$ there exists a constant $C_\mu$ such that
\begin{equation}\label{aprox-homo}
t^{\alpha/2}\|u(x,t)-u_\Delta(x,t)\|_{L^\infty(\R^N)}
\le C_\mu t^{-\mu}.
\end{equation}

In particular,  if $\alpha<N$, due to the behavior of $u_\Delta$ (see, for instance \cite{KP}) there holds that $u(x,t)\le C t^{-\alpha/2}$ and
\begin{equation*}
t^{\alpha/2}\|u(x,t)-U_{\alpha,A}(x,t)\|_{L^\infty(|x|\le K\sqrt t)}\to0\quad\mbox{as }t\to\infty\quad\forall K>0,
\end{equation*}
where $U_{\alpha,A}$ is the solution to \eqref{U-alpha}. (Note that the results of \cite{KP,KU} apply also to the solution $u_\Delta$ of the homogeneous heat equation. In this case, the constant $\delta_{\alpha,A}$ in \eqref{U-alpha} is zero so that,  $U_{\alpha,A}$ is a solution to the homogeneous heat equation).
 
If $\alpha=N$, due to the results in \cite{KU} for the solution of the homogeneous heat equation, there holds that $u(x,t)\le C t^{-N/2}\log t$ and
\begin{equation*}
t^{N/2}\Big\|\frac{u(x,t)}{\log t}-C_{A,N} U_\a(x,t)\Big\|_{L^\infty(|x|\le K\sqrt t)}\to0\quad\mbox{as }t\to\infty\quad\forall K>0,
\end{equation*}
where $U_\a$ is the fundamental solution of the heat operator with diffusivity $\a$ and $C_{A,N}$ is a constant that depends only on $A$ and $N$ .
\end{teo}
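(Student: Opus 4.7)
The starting point is the representation
\[
u(x,t)=e^{-t}u_0(x)+\bigl(W(\cdot,t)*u_0\bigr)(x),
\]
coming from the splitting $U=e^{-t}\delta+W$ of the fundamental solution. Since $u_\Delta(x,t)=(U_\a(\cdot,t)*u_0)(x)$, the difference decomposes as
\[
u(x,t)-u_\Delta(x,t)=e^{-t}u_0(x)+\bigl[(W(\cdot,t)-U_\a(\cdot,t))*u_0\bigr](x).
\]
The first term is exponentially small, so the entire task reduces to controlling the convolution of $W-U_\a$ with $u_0$ in $L^\infty(\R^N)$ — precisely the situation where the already established $L^{q'}$ estimate \eqref{estimate-W-U} is designed to be used.

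To obtain \eqref{aprox-homo}, the plan is to apply Hölder's inequality: for any conjugate pair $(q,q')$ with $q>N/\alpha$ the hypothesis $|x|^{\alpha}u_0(x)\to A$ together with $u_0\in L^{\infty}$ yields $u_0\in L^{q}(\R^N)$, so that
\[
\|(W-U_\a)(\cdot,t)*u_0\|_{L^\infty}\le \|W(\cdot,t)-U_\a(\cdot,t)\|_{L^{q'}}\,\|u_0\|_{L^q}\le C_{q}\, t^{-\frac{N+1}{2q}}.
\]
Given $\mu<\alpha/(2N)$, I would pick $q=(N+1)/(\alpha+2\mu)$; the constraint $\mu<\alpha/(2N)$ is exactly what ensures $q>N/\alpha$, and the exponent becomes $\alpha/2+\mu$, which delivers \eqref{aprox-homo} after absorbing the exponentially decaying $e^{-t}u_0$ term.

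For the second and third parts, I would first obtain the pointwise bounds on $u$ by estimating $u_\Delta$ directly. Writing $u_\Delta(x,t)\le C\int U_\a(x-y,t)(1+|y|)^{-\alpha}\,dy$ and rescaling $y=\sqrt{t}\,z$, one gets $u_\Delta(x,t)\le C\int e^{-|z|^{2}/(4\a)}(1+\sqrt{t}\,|z|)^{-\alpha}\,dz$, which is $O(t^{-\alpha/2})$ when $\alpha<N$ (the singularity $|z|^{-\alpha}$ is integrable) and $O(t^{-N/2}\log t)$ when $\alpha=N$ (splitting the integral at $|z|=1/\sqrt{t}$ produces the logarithm). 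Combined with \eqref{aprox-homo} this gives the claimed bounds on $u$ itself. Finally, the asymptotic identifications follow by the triangle inequality: \eqref{asym-heat-alpha} of Kamin–Peletier gives $t^{\alpha/2}\|u_\Delta-U_{\alpha,A}\|_{L^\infty(|x|\le K\sqrt t)}\to0$, and adding our \eqref{aprox-homo} yields the corresponding statement for $u$; in the $\alpha=N$ case, the analogous result of Kamin–Ughi \eqref{heat-N} is combined with \eqref{aprox-homo} after dividing by $\log t$, which only helps the already-small remainder.

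The main obstacle, such as it is, is getting the precise exponent $\mu<\alpha/(2N)$ right: this forces one to optimize $q$ against two opposing constraints — $q>N/\alpha$ so that $u_0\in L^q$, and $q$ small enough so that $(N+1)/(2q)\ge\alpha/2+\mu$. The other potentially delicate point is matching the $\log t$ factor in the $\alpha=N$ regime; here it is essential that the remainder $\|u-u_\Delta\|_{L^\infty}$ is $o(t^{-N/2})$ (not merely $O$), which is precisely what \eqref{aprox-homo} gives for any $\mu>0$, so that dividing by $\log t$ is harmless.
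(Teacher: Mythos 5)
Your proposal is correct and follows essentially the same route as the paper: write $u=e^{-t}u_0+W*u_0$, $u_\Delta=U_\a*u_0$, apply H\"older with the $L^{q'}$ bound \eqref{estimate-W-U} and $u_0\in L^q$ for $q>N/\alpha$, and observe that the two constraints on $q$ are compatible exactly when $\mu<\alpha/(2N)$ (your choice $q=(N+1)/(\alpha+2\mu)$ is just a reparametrization of the paper's $q=\frac{N}{\alpha}\frac{1}{1-\ep}$). The only difference is that you spell out the Gaussian rescaling giving the pointwise bounds $Ct^{-\alpha/2}$ and $Ct^{-N/2}\log t$, which the paper leaves implicit in its appeal to \eqref{asym-heat-alpha} and \eqref{heat-N}.
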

\begin{proof}
Since $u_0\in L^q(\R^N)$ if $q\alpha> N$, by taking $q=\frac N\alpha\frac1{1-\ep}$, $\ep>0$ and applying \eqref{estimate-W-U} we get,
$$\begin{aligned}
&\Big\|\int W(x-y,t)u_0(y)\,dy-\int U_\a(x-y,t)u_0(y)\,dy\Big\|_{L^\infty(\R^N)}\\
&\le C_q \|u_0\|_q t^{-\frac {N+1}{2q}}= C_\ep t^{-\frac\alpha2(1-\ep)-\frac\alpha{2N}(1-\ep)}\\
&= C_\ep t^{-\frac\alpha2} t^{-\frac\alpha{2N}+\ep\frac\alpha2\big(1+\frac1N\big)}
=C_\mu t^{-\frac\alpha2} t^{-\mu},
\end{aligned}
$$
with $0<\mu<\frac\alpha{2N}$ arbitrary.

Thus,
$$
\|u(\cdot,t)- u_\Delta(\cdot,t)\|_{L^\infty(\R^N)}\le e^{-t}\|u_0\|_{L^\infty(\R^N)}+C_\mu t^{-\frac\alpha2} t^{-\mu},
$$
and \eqref{aprox-homo} holds.

The asymptotic behavior of $u$ follows immediately from \eqref{aprox-homo}
together with \eqref{asym-heat-alpha} and \eqref{heat-N} respectively applied to the solution $u_\Delta$ of the homogeneous heat equation (see \cite{KP,KU}).
\end{proof}

\bigskip

Finally, we prove that $u$ behaves at each positive time as its initial datum.

\begin{prop}\label{u_L}
Let $u$ be the solution of \eqref{homo-sect2} with an initial datum $u_0\in L^\infty$ such that $|x|^\alpha u_0(x)\to A>0$ as $|x|\to\infty$ for some $\alpha>0$. Then,
\begin{enumerate}
\item For every $T>0$ there exists a constant $C_T>0$ such that $|u(x,t)|\le \frac {C_T}{(1+|x|)^\alpha}$ if $0\le t\le T$.

    \medskip

    \item $|x|^\alpha u(x,t)\to A$ as $|x|\to\infty$ uniformly for $t$ bounded.

\end{enumerate}

\end{prop}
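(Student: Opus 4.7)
My starting point would be the representation
$$u(x,t) = e^{-t} u_0(x) + \int_{\R^N} W(x-y,t)\, u_0(y)\,dy$$
obtained by convolving $u_0$ with $U(\cdot,t) = e^{-t}\delta + W(\cdot,t)$. Writing $L = J*\cdot - \mathrm{Id}$ and expanding $e^{tL}\delta$ as a Neumann series gives the explicit formula
$$W(x,t) = e^{-t}\sum_{k=1}^{\infty} \frac{t^k}{k!}\, J^{*k}(x),$$
where $J^{*k}$ is the $k$-fold convolution of $J$, supported in $B_{kR}$ (with $\mathrm{supp}(J)\subset B_R$) and of integral $1$. This representation is what allows a uniform-in-$t$ control of the tails of $W$ (including at $t=0$), which is crucial below.

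\textbf{Part (1).} From $u_0\in L^\infty$ and $|y|^\alpha u_0(y)\to A$ one gets $|u_0(y)|\le C_0(1+|y|)^{-\alpha}$. I would prove by direct inspection
$$|J^{*k}*u_0(x)| \le C_0\,\frac{(1+kR)^\alpha}{(1+|x|)^\alpha}\qquad (k\ge 0),$$
splitting according to whether $|x|\le 2kR$ (use $\|u_0\|_\infty$ and $(1+|x|)^\alpha\le(2(1+kR))^\alpha$) or $|x|>2kR$ (then $|x-y|\ge|x|/2$ on $\mathrm{supp}(J^{*k})$, so $(1+|x-y|)^{-\alpha}\le 2^\alpha(1+|x|)^{-\alpha}$, and use $\|J^{*k}\|_1=1$). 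Summing yields
$$|u(x,t)|\le \frac{C}{(1+|x|)^\alpha}\, e^{-t}\sum_{k=0}^\infty \frac{t^k(1+kR)^\alpha}{k!},$$
and the series is bounded on $[0,T]$.

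\textbf{Part (2).} Using $\int W(y,t)\,dy = 1-e^{-t}$ I would write the clean identity
$$|x|^\alpha u(x,t) - A = e^{-t}\bigl[|x|^\alpha u_0(x) - A\bigr] + \int_{\R^N} W(x-y,t)\bigl[|x|^\alpha u_0(y) - A\bigr]\,dy.$$
The first term tends to $0$ uniformly in $t\in[0,T]$ by hypothesis. For the integral, given $\ep>0$, I would fix $M$ large and split into three regions. On $\{|y|\le|x|/2\}$ one has $|x-y|\ge|x|/2$, so the contribution is bounded by $\bigl(C|x|^\alpha+A\bigr)\int_{|z|\ge|x|/2}W(z,t)\,dz$. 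On $\{|y|>|x|/2,\ |x-y|>M\}$ the integrand is bounded (since $|x|^\alpha|u_0(y)|\le C$) and the contribution is controlled by $C\int_{|z|>M}W(z,t)\,dz$. On $\{|y|>|x|/2,\ |x-y|\le M\}$ one has $|y|\ge|x|-M\to\infty$, so $|y|^\alpha u_0(y)=A+o(1)$ and $(|x|/|y|)^\alpha = 1+o(1)$ uniformly, giving
$$|x|^\alpha u_0(y)-A = \Bigl(\frac{|x|}{|y|}\Bigr)^{\!\alpha}\bigl[|y|^\alpha u_0(y)-A\bigr] + A\Bigl[\Bigl(\frac{|x|}{|y|}\Bigr)^{\!\alpha}-1\Bigr] = o(1),$$
and the contribution is $o(1)\cdot(1-e^{-t})$.

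\textbf{Main obstacle.} Everything hinges on rapid decay of $W(\cdot,t)$ \emph{uniformly in $t\in[0,T]$}, including near $t=0$ where $W$ has no classical smoothing estimate in $L^\infty$. Here the series representation saves the day: if $|x|>kR$ then only the terms with $k\ge |x|/R$ contribute to $W(x,t)$, so using $\|J^{*k}\|_\infty\le\|J\|_\infty$ and the crude bound $\sum_{k\ge n}t^k/k!\le 2(et/n)^n$ valid for $n\ge 2et$, one obtains that, for any $\beta>0$,
$$|x|^\beta\!\!\int_{|z|\ge |x|/2}\!\!W(z,t)\,dz \longrightarrow 0,\qquad \int_{|z|\ge M}\!W(z,t)\,dz \longrightarrow 0,$$
as $|x|\to\infty$ (resp.\ $M\to\infty$), uniformly for $t\in[0,T]$. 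This is the quantitative ingredient that makes the three-region splitting in Part (2) work and completes the proof.
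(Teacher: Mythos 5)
Your proposal is correct, but it follows a genuinely different route from the paper. The paper does not use the representation $u=e^{-t}u_0+W(\cdot,t)*u_0$ at all here: it sets up the integral operator
$$
\T v(x,t)=e^{-t}u_0(x)+\int_0^t\!\!\int e^{-(t-s)}J(x-y)\,v(y,s)\,dy\,ds
$$
and runs a fixed-point argument in the weighted class $\K$ of functions with $(1+|x|)^\alpha|v|\le 2B$ and $|x|^\alpha v(x,t)\to A$ uniformly in $t$; the only structural input is that $\mathrm{supp}\,J\subset B_R$, so a single convolution with $J$ displaces the weight by at most $R$, which yields invariance of $\K$ on a time step $t_0=(R+1)^{-\alpha}$ that is independent of $B$, and the result for all $T$ follows by iterating in time. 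Your argument replaces this soft iteration by the explicit Neumann series $W(x,t)=e^{-t}\sum_{k\ge1}\frac{t^k}{k!}J^{*k}(x)$, which encodes the same finite-propagation mechanism ($J^{*k}$ supported in $B_{kR}$) but globally in time through the factorial decay; this gives you part (1) by direct summation and, more importantly, the uniform-in-$t$ super-polynomial tail bounds on $W(\cdot,t)$ down to $t=0$ that make your three-region splitting in part (2) work — you correctly identified that this uniformity near $t=0$ is the delicate point, since the smoothing estimates \eqref{estimate-W} are useless there. Your route is more quantitative (the tail estimates for $W$ are a reusable by-product) at the cost of manipulating the series; the paper's route is softer, needing only uniqueness and the comparison principle and no information about $W$ beyond its existence. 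Both decompositions of $|x|^\alpha u-A$ are essentially the same in spirit (isolate the region $|x-y|$ bounded, where $|y|\to\infty$ and the ratio $|x|/|y|\to1$), the paper's being simpler because with $\T$ only the single kernel $J$, supported in $B_R$, ever appears, so no far-field region is needed.
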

\begin{proof} The proof follows from a fixed point argument. In fact, $u$ is a fixed point of the operator
$$
\T v(x,t)=e^{-t}u_0(x)+\int_0^t\int e^{-(t-s)}J(x-y) v(y,s)\,dy\,ds.
$$

Reciprocally, every bounded function that is a fixed point of the operator $\T$ in $\R^N\times(0,t_0)$ is a bounded solution of problem \eqref{homo-sect2}. By uniqueness, it coincides with $u$ in that time interval.

Thus, in order to prove the proposition we will show that $\T$ has a fixed point in the set
$$
\begin{aligned}
&\K:=\{v\in L^\infty(\R^N\times(0,t_0))\,/\,(1+|x|)^\alpha |v(x,t)|\le 2B,
|x|^\alpha v(x,t)\to A\mbox{ as }|x|\to\infty\\
&\hskip1.5cm \mbox{uniformly with respect to }t\in [0,t_0]\}.
\end{aligned}
$$
The  time step $t_0$ will be independent of the constant $B$ where $B$ is a bound of $(1+|x|)^\alpha|u_0(x)|$. Therefore, we can proceed inductively and find a constant for any time interval thus proving (1). Moreover, (2) follows immediately from this argument.

So, let $v\in\K$ and let $w=\T v$. Then, since the support of $J$ is contained in a ball $B_R$ and $|x-y|\le R$ implies that $|x|\le R+|y|$, by assuming that $R\ge1$ we get
$$
\begin{aligned}
&(1+|x|)^\alpha |w(x,t)|\le e^{-t}(1+|x|)^\alpha|u_0(x)|+\int_0^t\int e^{-(t-s)}J(x-y)(1+|x|)^\alpha|v(y,s)|\,dy\,ds\\
&\hskip1.5cm\le e^{-t}B+\int_0^t\int e^{-(t-s)}J(x-y)(1+(R+|y|)
^\alpha)|v(y,s)|\,dy\,ds\\
&\hskip1.5cm \le B+(R+1)^\alpha B t\le 2B.
\end{aligned}
$$
If $t\le t_0=(R+1)^{-\alpha}$.

Now, let us prove that $|x|^\alpha w(x,t)\to A$ as $|x|\to \infty$ uniformly for $t\in (0,t_0)$. In fact, if $|x-y|<R$, there holds that $|y|>|x|-R$ and $$\Big|\frac{|x|}{|y|}-1\Big|<\frac R{|y|}<\frac R{|x|-R}.$$

Thus, if $|x|$ is large enough, $|\frac{|x|^\alpha}{|y|^\alpha}-1\big|\le\ep$.

Therefore, 
\begin{align*}
&\big|\int J(x-y)|x|^\alpha v(y,s)\,dy-A\Big|\le\\
&\hskip2cm \leq \int J(x-y)\big||y|^\alpha v(y,s)-A\Big|\,dy+\Big|\int J(x-y)\big||x|^\alpha-|y|^\alpha|\, |v(y,s)|\,dy\Big|\\
&\hskip2cm \le \int_{|y|>|x|-R}J(x-y)\ep\,dy+\ep\int J(x-y)|y|^\alpha |v(y,s)|\,dy <(1+2B)\ep,
\end{align*}
if $|x|$ is large enough.

Finally,
$$
\begin{aligned}
\big||x|^\alpha w(x,t)-A\big|\le &e^{-t}\big(|x|^\alpha u_0(x)-A\
\big)+
\int_0^te^{-(t-s)}\Big|\int J(x-y)|x|^\alpha v(y,s)\,dy-A\Big|\,ds\\<&2(1+B)\ep.
\end{aligned}
$$

This ends the proof.
\end{proof}

\section{Existence, uniqueness and first properties of the solution}\label{existence}\setcounter{equation}{0}
In this section we prove existence, uniqueness and time local  properties of the solution to the
nonlocal diffusion equation with absorption 
\begin{equation}\begin{aligned}\label{problem-modulo}
&u_t=Lu-|u|^{p-1}u\\
&u(x,0)=u_0(x)
\end{aligned}
\end{equation}
 In particular, we are interested in nonnegative
bounded initial data that behave as a negative power at infinity. That is, which
satisfy \eqref{condalpha}.

Local existence and uniqueness of the solution to \eqref{problem-modulo} with
$p>1$ and bounded initial datum follows by a fixed point argument. For instance, if we call $S(t)$
the semigroup associated to the equation $u_t=Lu$ in $L^\infty(\R^N)$, we can find the solution as
a fixed point in $\K:=\{v\in L^\infty(\R^N\times(0,t_0))\,/\,\|v\|_\infty\le 2\|u_0\|_\infty\}$ of
the operator
$$
\T v=S(t)u_0-\int_0^t S(t-s)|v(x,s)|^{p-1}v(x,s)\,ds.
$$

In fact, it is easy to see that for small $t_0$ the operator $\T$ is a contraction from $\K$ to
$\K$. Thus, there exists a bounded solution in the time interval $(0,t_0)$. Since positive
constants are supersolutions and negative constants are subsolutions to problem \eqref{problem-modulo}, and the function $\tau\to|\tau|^{p-1}\tau$ is locally
Lipschitz, the comparison principle (see, for instance, \cite{LW}) implies that the fixed point
$u$ is bounded by $\|u_0\|_\infty$. Therefore, the solution can be extended for all times.

Moreover, when $u_0\ge0$, the solution $u_L$ of the homogeneous equation \eqref{probl-homo} with initial datum $u_0$ is nonnegative. Thus, $u_L$ is a supersolution and $0$ is a subsolution to \eqref{problem-modulo}. By the comparison principle we deduce that
\begin{equation}\label{compar-u-u_L}
0\le u(x,t)\le u_L(x,t).
\end{equation}

 For the type of initial data we are interested in, much more can be said. In fact,
\begin{teo}\label{local-prop-u}
Let $u_0\in L^\infty(\R^N)$, $u_0\ge0$ be such that $|x|^\alpha u_0(x)\to A>0$ as $|x|\to\infty$ with $\alpha, A>0$. Let $p>1$ and let $u$ be the  solution  to \eqref{problem}. Then,
\begin{enumerate}
\item For every $T<\infty$, there exists a constant $C_T$ such that
$$
u(x,t)\le C_T(1+|x|)^{-\alpha}\quad\mbox{for }t\le T.
$$

    \medskip

    \item  If $\alpha<N$, there exists a constant $C$ such that $u(x,t)\le C t^{-\alpha/2}$. If $\alpha=N$, there exists a constant $C$ such that $u(x,t)\le C t^{-N/2}\log t$.

        \medskip

    \item For every $t>0$, $|x|^\alpha u(x,t)\to A$ as $|x|\to\infty$ uniformly for $t$ in bounded sets.

\end{enumerate}

\end{teo}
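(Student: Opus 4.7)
The approach rests on the comparison $0\le u(x,t)\le u_L(x,t)$ established in \eqref{compar-u-u_L}, where $u_L$ solves the homogeneous equation with the same initial datum $u_0$. Parts (1) and (2) are then immediate: Proposition \ref{u_L}(1) gives the $|x|$-decay of $u_L$, and Theorem \ref{teo-u_L} gives the $t$-decay of $u_L$ in the two regimes $\alpha<N$ and $\alpha=N$, both of which transfer to $u$ by the comparison. The same comparison together with Proposition \ref{u_L}(2) also yields the ``$\limsup\le A$'' half of (3), namely $\limsup_{|x|\to\infty}|x|^\alpha u(x,t)\le A$ for each $t$ in a bounded interval.

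For the matching lower bound, and hence the full convergence in (3), I will mirror the fixed-point argument of Proposition \ref{u_L}, now incorporating the absorption. Writing the equation as $u_t+u=J*u-|u|^{p-1}u$ and applying Duhamel exhibits $u$ as a fixed point of
$$
\T v(x,t)=e^{-t}u_0(x)+\int_0^t\!\int e^{-(t-s)}J(x-y)v(y,s)\,dy\,ds-\int_0^t e^{-(t-s)}|v(x,s)|^{p-1}v(x,s)\,ds.
$$
Fix $T>0$, let $B=C_T$ be the constant from part (1) on $[0,T]$, and work in the closed set
$$
\K=\{v\in L^\infty(\R^N\times(0,t_0)):(1+|x|)^\alpha|v(x,t)|\le 2B,\ |x|^\alpha v(x,t)\to A\text{ as }|x|\to\infty\text{ uniformly in }t\in[0,t_0]\}.
$$
The first two terms of $\T v$ are handled verbatim as in Proposition \ref{u_L}. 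For the new absorption term, since $(1+|x|)^\alpha|v|^p\le (2B)^p(1+|x|)^{-\alpha(p-1)}\le (2B)^p$, its contribution to $(1+|x|)^\alpha|\T v|$ is bounded by $(2B)^p t_0$, which is absorbed into the $2B$ bound for $t_0$ small; and $|x|^\alpha|v(x,s)|^p\le (2B)^p|x|^\alpha(1+|x|)^{-\alpha p}\to 0$ uniformly in $s\in[0,t_0]$ because $p>1$, so the absorption contribution to $|x|^\alpha\T v$ vanishes at infinity. Combined with the convolution limit computed in Proposition \ref{u_L}, this gives $|x|^\alpha\T v(x,t)\to A$. Hence $\T:\K\to\K$, and local Lipschitz continuity of $\tau\mapsto|\tau|^{p-1}\tau$ on bounded sets makes $\T$ a contraction for $t_0$ small. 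The unique fixed point in $\K$ is a bounded solution with initial datum $u_0$, so by uniqueness for bounded solutions it coincides with $u$; this places $u\in\K$ on $[0,t_0]$.

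To reach all of $[0,T]$ I iterate on consecutive slabs $[kt_0,(k+1)t_0]$: the previous step delivers the required decay $|x|^\alpha u(x,kt_0)\to A$ and part (1) delivers the uniform bound $(1+|x|)^\alpha u(x,kt_0)\le B$, so the same argument restarts with the same constants. The main obstacle sits exactly here: unlike in Proposition \ref{u_L}, where $t_0$ depended only on $R$ and $\alpha$, the absorption term makes $t_0$ also depend on $B$ and $p$. This is resolved by the fact that part (1) supplies a \emph{single} bound $B=C_T$ valid on the entire interval $[0,T]$, so the step $t_0$ is bounded below throughout the iteration and finitely many steps cover $[0,T]$.
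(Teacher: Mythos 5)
Your treatment of parts (1) and (2) — comparison $0\le u\le u_L$ from \eqref{compar-u-u_L} plus Proposition \ref{u_L}(1) and Theorem \ref{teo-u_L} — is exactly the paper's argument. For part (3) you take a genuinely different and still correct route. The paper stays with the variation-of-constants formula $u=u_L-\int_0^t S(t-s)u^p(\cdot,s)\,ds$: it regards each $u^p(\cdot,s)$ as a nonnegative datum bounded by $B(1+|x|)^{-\alpha p}$ uniformly in $s\in[0,T]$, applies Proposition \ref{u_L} with exponent $\alpha p$ to get $|x|^{\alpha p}S(t-s)u^p(x,s)\le C_T$, and concludes that $|x|^{\alpha}S(t-s)u^p(x,s)\le C_T|x|^{-\alpha(p-1)}$ is uniformly small for $|x|$ large since $p>1$, so the correction term is $o(|x|^{-\alpha})$ after integrating in $s$. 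You instead rerun the fixed-point construction of Proposition \ref{u_L} with the absorption built into the Duhamel operator $\T$, check that $\T$ preserves both the weighted bound and the condition $|x|^\alpha v\to A$ (the new term contributes at most $(2B)^pt_0$ to the weighted norm and decays like $|x|^{-\alpha(p-1)}$ after multiplication by $|x|^\alpha$), and iterate in time. Both arguments are sound. The paper's is shorter because it reuses the linear result as a black box on the family $\{u^p(\cdot,s)\}$ and needs no new contraction; yours is more self-contained but must confront the fact that the step size $t_0$ now depends on the a priori bound $B$, an issue you correctly identify and legitimately resolve by feeding in the single bound $B=C_T$ from part (1), which is available beforehand since (1) is proved independently by comparison. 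Note also that your fixed-point argument delivers the full two-sided convergence in one stroke, so the preliminary $\limsup\le A$ observation is not actually needed.
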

\begin{proof}(1) and (2) follow immediately from the estimate \eqref{compar-u-u_L}
above and the results for $u_L$ (Proposition \ref{u_L} (1)).

In order to prove (3) we use the variations of constants formula
$$
u(x,t)=u_L(x,t)-\int_0^t S(t-s)u^p(x,s)\,ds.
$$

We already know that the first term in the right hand side has the correct limit uniformly for $t$ bounded (Proposition \ref{u_L}(2)). Thus, we have to prove that the second term goes to 0 faster than $|x|^{-\alpha}$.

We think of $u^p(x,s)$ as a nonnegative, bounded, initial condition that
satisfies that, for a constant $B$ that is independent of $s\in[0,T]$, there holds that $|x|^{p\alpha}u^p(x,s)\le B$. By
Proposition \ref{u_L} and Theorem \ref{teo-u_L} we know that
$$
|x|^{\alpha p} S(t-s)u^p(x,s)\le C_T\quad\mbox{if }0\le s\le t\le T,
$$
for a certain constant $C_T$. Thus,
$$
|x|^\alpha S(t-s)u^p(x,s)=|x|^{-\alpha(p-1)}|x|^{\alpha p}S(t-s)u^p(x,s)\le C_T |x|^{-\alpha(p-1)}<\ep,
$$
if $|x|$ is large. Therefore,
$$
\Big||x|^\alpha\int_0^tS(t-s)u^p(x,s)\,dx\Big|<\ep T\quad\mbox{if}\quad0\le s\le t\le T\quad\mbox{and }|x|\mbox{ is large}.
$$

\end{proof}
\bigskip

\section{Asymptotic behavior for the equation with absorption}\label{result}
\setcounter{equation}{0}

In this section we prove our main result. Namely, that in the supercritical case, the solution to
\eqref{problem} with a bounded nonnegative initial datum $u_0$ satisfying
\eqref{condalpha} has the same asymptotic behavior as the one of the homogeneous heat equation
with diffusivity $\a$.

\begin{teo}
Let $u_0\ge0$, $u_0\in L^\infty(\R^N)$ be such that $|x|^\alpha u_0(x)\to A>0$ as $|x|\to\infty$ with $0<\alpha\le N$. Let $p>1+\frac2\alpha$. Let $u$ be the solution to \eqref{problem} with $u(x,0)=u_0(x)$.

Then, if $\alpha<N$
$$
t^{\alpha/2}\|u(x,t)-U_{\alpha, A}(x,t)\|_{L^\infty(|x|\le K\sqrt t)}\to0\quad\mbox{as}\quad t\to\infty\quad\forall K>0,
$$
where $U_{\alpha,A}$ is the solution to \eqref{U-alpha}.

If $\alpha=N$,
\begin{equation*}
t^{N/2}\Big\|\frac{u(x,t)}{\log t}-C_{A,N} U_a(x,t)\Big\|_{L^\infty(|x|\le K\sqrt t)}\to0\quad\mbox{as }t\to\infty\quad\forall K>0
\end{equation*}
where $U_a$ is the fundamental solution of the heat operator with diffusivity $\a$ and $C_{A,N}$ is a constant that depends only on $A$ and $N$ .

\end{teo}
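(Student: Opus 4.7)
The plan follows the strategy outlined in the introduction: compare $u$ with the solution $u_L^{(t_0)}$ of the linear nonlocal equation \eqref{probl-homo} that coincides with $u$ at a suitably large time $t_0$, and then invoke the linear asymptotics of Theorem \ref{teo-u_L}. By Duhamel's formula, for every $t\ge t_0$,
\[
u(x,t)=u_L^{(t_0)}(x,t)-\int_{t_0}^t S(t-s)u^p(\cdot,s)(x)\,ds,
\]
where $S(\tau)$ denotes the semigroup generated by $L$. Writing
\[
u(\cdot,t)-U_{\alpha,A}(\cdot,t)=\bigl[u(\cdot,t)-u_L^{(t_0)}(\cdot,t)\bigr]+\bigl[u_L^{(t_0)}(\cdot,t)-U_{\alpha,A}(\cdot,t)\bigr],
\]
the task reduces to showing that both brackets are $o(t^{-\alpha/2})$ in $L^\infty(|x|\le K\sqrt t)$, with the analogous $t^{N/2}/\log t$ scaling when $\alpha=N$.

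For the first bracket, I would use the decay $\|u(\cdot,s)\|_\infty\le Cs^{-\alpha/2}$ (respectively $Cs^{-N/2}\log s$ when $\alpha=N$) from Theorem \ref{local-prop-u}(2), together with the $L^\infty$-contractivity of $S(\tau)$, to estimate
\[
\bigl\|u(\cdot,t)-u_L^{(t_0)}(\cdot,t)\bigr\|_\infty\le\int_{t_0}^t\|u^p(\cdot,s)\|_\infty\,ds\le C\,t_0^{\,1-p\alpha/2}.
\]
The supercritical hypothesis $p>1+2/\alpha$ gives $p\alpha/2>1+\alpha/2>1$, making this integral convergent. I would then pick $t_0=t^\beta$ with $\beta\in\bigl(\alpha/(p\alpha-2),1\bigr)$; this interval is nonempty because $p\alpha-2>\alpha$ in the supercritical range. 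With this choice,
\[
t^{\alpha/2}\bigl\|u(\cdot,t)-u_L^{(t_0)}(\cdot,t)\bigr\|_\infty\le C\,t^{\alpha/2+\beta(1-p\alpha/2)}\longrightarrow 0\quad\mbox{as }t\to\infty,
\]
the exponent being negative by the choice of $\beta$.

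For the second bracket, the idea is to apply Theorem \ref{teo-u_L} to $u_L^{(t_0)}$. Its initial datum $u(\cdot,t_0)$ satisfies $|y|^\alpha u(y,t_0)\to A$ as $|y|\to\infty$ by Theorem \ref{local-prop-u}(3), so in principle Theorem \ref{teo-u_L} yields that $u_L^{(t_0)}(\cdot,t)$ is asymptotic to $U_{\alpha,A}(\cdot,t-t_0)$ in the scaled $L^\infty$-norm on $|x|\le K\sqrt{t-t_0}$; the replacement of $U_{\alpha,A}(\cdot,t-t_0)$ by $U_{\alpha,A}(\cdot,t)$ is harmless since the self-similar factor $(t/(t-t_0))^{\alpha/2}\to 1$ because $t_0/t=t^{\beta-1}\to 0$. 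The case $\alpha=N$ is handled identically with scaling $t^{N/2}/\log t$ and $C_{A,N}U_\a$ in place of $U_{\alpha,A}$, logarithmic factors being absorbed into constants.

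The main obstacle is precisely making this last step rigorous, because $t_0=t^\beta$ depends on $t$ and Theorem \ref{teo-u_L} cannot be invoked as a black box. One must reopen its proof and check that the quantitative estimate coming from the $L^{q'}$ bound on $W-U_\a$, with $q=N/(\alpha(1-\ep))$ as in the proof of Theorem \ref{teo-u_L}, combined with the uniform bound $\|u(\cdot,t_0)\|_q\le Ct_0^{\,N/(2q)-\alpha/2}$ (which follows from the pointwise control $u\le C\min(t_0^{-\alpha/2},(1+|\cdot|)^{-\alpha})$ provided by Theorem \ref{local-prop-u}), still delivers the required $o(t^{-\alpha/2})$ rate — a computation that succeeds precisely in the supercritical regime. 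A parallel quantitative verification is needed for the Kamin--Peletier heat equation asymptotic \eqref{asym-heat-alpha}, whose robustness under varying initial datum is made plausible by the uniform convergence $|y|^\alpha u(y,t_0)\to A$.
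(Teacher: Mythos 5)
Your overall strategy (compare $u$ with the linear flow restarted at time $t_0$, then invoke Theorem \ref{teo-u_L}) is the paper's, but your estimate of the Duhamel term is too crude, and this forces the choice $t_0=t^\beta$ which the paper avoids and which you cannot justify. The bound $\|u(\cdot,t)-u_L^{(t_0)}(\cdot,t)\|_\infty\le\int_{t_0}^t\|u^p(\cdot,s)\|_\infty\,ds\le Ct_0^{1-p\alpha/2}$ carries no factor $t^{-\alpha/2}$, so with $t_0$ fixed the quantity $t^{\alpha/2}\|u-u_L^{(t_0)}\|_\infty$ is not controlled; hence your $t$-dependent $t_0$. But then, as you yourself concede, Theorem \ref{teo-u_L} and the Kamin--Peletier asymptotics \eqref{asym-heat-alpha} are qualitative statements about a \emph{fixed} initial datum and cannot be applied to the moving datum $u(\cdot,t^\beta)$. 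This is not a routine verification: the convergence $|y|^\alpha u(y,t_0)\to A$ is uniform only for $t_0$ in bounded sets (Theorem \ref{local-prop-u}(3)), and since $u(\cdot,t_0)\le Ct_0^{-\alpha/2}$ the region where $|y|^\alpha u(y,t_0)$ is close to $A$ recedes like $|y|\gtrsim\sqrt{t_0}$, so the uniformity over the family $\{u(\cdot,t_0)\}_{t_0\to\infty}$ that your argument needs genuinely fails to be automatic. The proof is therefore incomplete at its central step.

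The missing idea is a smoothing estimate for the semigroup acting on the absorption term, which restores the prefactor $t^{-\alpha/2}$ and lets $t_0$ stay fixed. Writing $S(\tau)f=e^{-\tau}f+W(\cdot,\tau)*f$ and using $\|W(\cdot,\tau)\|_{q'}\le C\tau^{-N/(2q)}$ with $q=N/\alpha$, together with the weak-type Young inequality (legitimate since $u_0\in L^{N/\alpha,\infty}$) to get $\|u^p(\cdot,s)\|_q\le Cs^{-\frac\alpha2(p-1)}$, one obtains $S(t-s)u^p(\cdot,s)\le C(t-s)^{-\alpha/2}s^{-\frac\alpha2(p-1)}$. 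Splitting the integral at $t/2$, using this bound on $(t_0,t/2)$ and the sup bound $Cs^{-\alpha p/2}$ on $(t/2,t)$, gives
$$
\int_{t_0}^tS(t-s)u^p(\cdot,s)\,ds\le Ct^{-\alpha/2}\bigl(t_0^{-\frac\alpha2(p-1)+1}+t^{-\frac\alpha2(p-1)+1}\bigr),
$$
and the exponent $-\frac\alpha2(p-1)+1$ is negative precisely because $p>1+2/\alpha$. Hence $t_0$ may be chosen large but fixed, depending only on $\ep$, and Theorem \ref{teo-u_L} then applies legitimately to the single datum $u(\cdot,t_0)$, which satisfies \eqref{condalpha} by Theorem \ref{local-prop-u}(3). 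The case $\alpha=N$ is handled the same way with $q=1/(1-\delta)$ and the extra logarithmic factors. Your computation for the first bracket with fixed $t_0$ is otherwise correct and coincides with the paper's estimate on the interval $(t/2,t)$; what it lacks is the $(t-s)^{-\alpha/2}$ gain on $(t_0,t/2)$.
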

\begin{proof}
By Theorem \ref{local-prop-u} we know that $(1+|x|)^\alpha u(x,t_0)
\le C(t_0)$. Thus, by Theorem \ref{teo-u_L} we have that the solution $u_L$ to
\begin{equation}\label{u_L-t_0}
\begin{aligned}
&v_t=Lv\qquad\mbox{in }\R^N\times(t_0,\infty)\\
&v(x,t_0)=u(x,t_0),
\end{aligned}
\end{equation}
satisfies
$$
t^{\alpha/2}\|u_L(x,t)-U_{\alpha, A}(x,t)\|_{L^\infty(|x|\le K\sqrt t)}\to0\quad\mbox{as}\quad t\to\infty\quad\forall K>0 \quad\mbox{if}\quad \alpha<N,
$$
\begin{equation*}
t^{N/2}\Big\|\frac{u_L(x,t)}{\log t}-C_{A,N} U(x,t)\Big\|_{L^\infty(|x|\le K\sqrt t)}\to0\quad\mbox{as }t\to\infty\quad\forall K>0\quad\mbox{if}\quad \alpha=N.
\end{equation*}

Thus, the theorem will be proved if we show that for every $\ep>0$ there exists $t_0$ such that
$$
t^{\alpha/2}\|u(x,t)-u_L(x,t)\|_{L^\infty(\R^N)}\le \ep\qquad\mbox{for }t\ge 2t_0.
$$
Here $u_L$ is the solution of \eqref{u_L-t_0}.

In order to prove this result we need to estimate
$$
u(x,t)-u_L(x,t)=-\int_{t_0}^t S(t-s)u^p(x,s)\,ds.
$$

We begin with the case $\alpha<N$.

Let us estimate the integrand $S(t-s)u^p(x,s)$. On one hand, since by \eqref{compar-u-u_L} and Theorem \ref{teo-u_L}  there holds that $u(x,s)\le C s^{-\alpha/2}$, the maximum principle applied to the solutions of $v_t=Lv$, renders the estimate
\begin{equation}\label{1}
0\le S(t-s)u^p(x,s)\le C s^{-\alpha p/2}.
\end{equation}

On the other hand,
$$
\begin{aligned}
S(t-s)\,u^p(x,s) &= e^{-(t-s)}u^p(x,s)+\int W(x-y,t-s)u^p(y,s)\,dy\\
&\le e^{-(t-s)}s^{-\frac\alpha2 p}+\|W(\cdot,t-s)\|_{q'}\|u^p(\cdot,s)\|_q.
 \end{aligned}
 $$

  As $p>1$ we can take $q=\frac N\alpha$. Then,
$\|W(\cdot,t-s)\|_{q'}\le C(t-s)^{-\frac N{2q}}=C(t-s)^{-\frac\alpha2}$.

On the other hand, $\|u^p(x,s)\|_{L^q(\R^N)}=\|u(x,s)\|_{L^{pq}(\R^N)}^p$.
Thus, since $q=\frac N\alpha>1$,
$$
 \begin{aligned}
 \|u^p(\cdot,s)\|_q^{\frac{1}{p}}&\le \|e^{-s}u_0(\cdot)\|_{pq}
 +\Big\|\int W(x-y,s)u_0(y)\,dy\Big\|_{pq}\\
 &\le C e^{-s}+C[u_0]_{q,\infty}\|W(\cdot,s)\|_r,
 \end{aligned}
 $$
  with $1+\frac1{pq}=\frac1r+\frac1q$ and $[u_0]_{q,\infty}=\sup_{\lambda>0}\Big(\lambda^q
 \big|\{|u_0|>\lambda\}\big|\Big)^{1/q}$ (see, for instance \cite{G}, Theorem 1.4.24).

 Since $u_0$ is bounded and $|x|^\alpha u_0(x)\to A$ as $|x|\to\infty$ there holds that $u_0\in L^{q,\infty}$ and,
 $$
 \|u^p(\cdot,s)\|_q^{\frac{1}{p}}\le
 C e^{-s}+C s^{-\frac N{2r'}}=  C s^{-\frac{\alpha(p-1)}{2p}},
 $$
and hence,
$$\|u^p(\cdot,s)\|_q\le C s^{-\frac{\alpha}{2}(p-1)}.$$ 

Therefore,
\begin{equation}\label{2}
S(t-s)\,u^p(x,s)\le C e^{-(t-s)}s^{-\frac\alpha2 p}+C(t-s)^{-\frac\alpha2}s^{-\frac\alpha2(p-1)}\le C(t-s)^{-\frac\alpha2}s^{-\frac\alpha2(p-1)}.
\end{equation}

\bigskip

Now we estimate the approximation error for $t\ge 2t_0$. We need to separate the
integral and use estimate \eqref{1} for $s\in(t/2,t)$ and estimate \eqref{2} for $s\in(t_0,t/2)$.
We obtain, by using that $\frac\alpha2(p-1)-1\ge0$ (critical or supercritical
cases),
\begin{eqnarray*}
\int_{t_0}^tS(t-s)\,u^p(x,s)\,ds& \leq &
C\int^{t/2}_{t_0}(t-s)^{-\frac\alpha2}s^{-\frac\alpha2(p-1)}\,ds+
C\int_{t/2}^ts^{-\frac\alpha2p}\,ds\\
& \le &( t/2)^{-\frac\alpha2}\int^{t/2}_{t_0} s^{-\frac\alpha2(p-1)}\,ds+ C\int_{t/2}^ts^{-\frac\alpha2p}\,ds\\
&\le &Ct^{-\frac\alpha2} t_0^{-\frac\alpha2(p-1)+1}+Ct^{-\frac\alpha2p+1}.
\end{eqnarray*}

\medskip

Thus, if $t_0$ is large enough and $t\ge 2 t_0$, we get, by using  that we are in the supercritical case $p>1+2/\alpha$,
$$\begin{aligned}
t^{\frac\alpha2}\int_{t_0}^tS(t-s)\,u^p(x,s)\,ds
\le C t_0^{-\frac\alpha2(p-1)+1}
+C t^{-\frac\alpha2(p-1)+1}<\ep.
\end{aligned}
$$

\bigskip

We remark that this is a sharp estimate that shows, in particular, that in the critical case the absorption and the diffusion terms are of the same order.

\bigskip

Now we analyze the case $\alpha=N$.

We proceed as before. Estimate \eqref{1} is changed to
 \begin{equation}\label{1p}
 0\le S(t-s)u^p(x,s)\le C s^{-Np/2}\log^p s.
 \end{equation}

 For the equivalent to estimate \eqref{2} we have to proceed differently since the general form of Young's inequality that we have used is not valid when $q=1$ as would be now the case.

 We take instead $q=\frac1{1-\delta}$ with $\delta>0$ small to be chosen later (we could have
  proceeded in this way before but we wouldn't have gotten the sharp estimate), and use Young's inequality to get
 $$
\|u^p(\cdot,s)\|_{L^1}=\|u(\cdot,s)\|_{L^p}^p\le C e^{-ps}+\|W(\cdot,s)\|_{L^r}^p\|u_0\|_{L^q}^p,
$$\
with $1+\frac1p=\frac1r+\frac1q$. So that, $\frac1{r'}=1-\delta-\frac1p$ and
$\frac{Np}{2r'}=\frac N2(p-1-\delta p)$. Thus,
$$
\|u^p(\cdot,s)\|_{L^1}\le C e^{-ps}+C_\mu s^{-\frac N2(p-1)+\mu} \le
C_\mu s^{-\frac N2(p-1)+\mu},
 $$
 with $\mu=\frac N2\delta p$ as small as needed.

 Therefore,
 \begin{equation}\label{2p}
 \begin{aligned}
 S(t-s)u^p(x,s)&\le e^{-(t-s)}s^{-\frac N2p}\log^p s+\|W(\cdot,t-s)\|_\infty\|u^p(\cdot,s)\|_1\\
 &\le Ce^{-(t-s)}s^{-\frac N2p}\log^p s+C_\mu(t-s)^{-\frac N2} s^{-\frac N2(p-1)+\mu} \\
 & \le C_\mu(t-s)^{-\frac N2} s^{-\frac N2(p-1)+\mu}.
 \end{aligned}
 \end{equation}

 As before, we use \eqref{1p} and \eqref{2p} to estimate the error. We have, by taking $0<\mu<\frac N2(p-1)-1$,
 $$
 \begin{aligned}
 \|u(x,t)-u_L(x,t)\|_{L^\infty(\R^N)}&\le \big\|\int_0^t S(t-s)u^p(x,s)\,ds\|_{L^\infty(\R^N)}\\
 &\le C_\mu\int_{t_0}^{t/2}(t-s)^{-\frac N2} s^{-\frac N2(p-1)+\mu}\,ds+C
 \int_{t/2}^t s^{-\frac N2p}\log^p s\,ds\\
&\le C_\mu\big( t/2\big)^{-\frac N2}t_0^{-\frac N2(p-1)+1+\mu}+Ct^{-\frac N2p+1}\log^p t.
 \end{aligned}
 $$

So that, since $p>1+2/N$, if $t_0$ is large enough and $t\ge2t_0$ we get
$$
t^{N/2}\|u(x,t)-u_L(x,t)\|_{L^\infty(\R^N)}\le C_\mu t_0^{-\frac N2(p-1)+1+\mu}+C t^{-\frac N2(p-1)+1}\log^p t<\ep.
$$

So, the theorem is proved.
\end{proof}
\bigskip

\end{document}